\newtheorem{theorem}{Theorem}[section]
\newtheorem{lemma}[theorem]{Lemma}
\newtheorem{remark}{Remark}
\newtheorem{claim}{Claim}
\newcommand{\hyperg}[4]{\: _2\! F_1 \left( #1,\, #2;\, #3;\, #4 \right)}
\begin{document}
\title[Lipschitz continuity of the solutions to the Dirichlet problems]{Lipschitz continuity of the solutions
to the Dirichlet problems for the invariant laplacians} 

\author{Congwen Liu}
\email{cwliu@ustc.edu.cn}
\address{CAS Wu Wen-Tsun Key Laboratory of Mathematics,
School of Mathematical Sciences,
University of Science and Technology of China\\
Hefei, Anhui 230026,
People's Republic of China.}
\thanks{
The first author was supported by the National Natural Science Foundation of China grant 11971453.}

\author{Heng Xu}
\email{xuheng86@mail.ustc.edu.cn}
\address{School of Mathematical Sciences,
University of Science and Technology of China\\
Hefei, Anhui 230026,
People's Republic of China.}

\subjclass[2010]{Primary 35J25; Secondary 31B05.}

\begin{abstract}
This short note is motivated by an attempt to understand the distinction between the Laplace operator
and the hyperbolic Laplacian on the unit ball of $\mathbb{R}^n$, regarding the Lipschitz continuity of the solutions to the
corresponding Dirichlet problems.

We investigate the Dirichlet problem
\begin{equation*}
\begin{cases} \Delta_{\vartheta} u = 0, & \text{ in }\, \mathbb{B}^n,\\
u=\phi, & \text{ on }\, \mathbb{S}^{n-1},
\end{cases}
\end{equation*}
where
\[
\Delta_{\vartheta} := (1-|x|^2) \bigg\{ \frac {1-|x|^2} {4} \Delta  + \vartheta \sum_{j=1}^n x_{j} \frac {\partial } {\partial x_j}
+ \vartheta \left( \frac {n}{2}-1- \vartheta \right) I\bigg\}.
\]
We show that the Lipschitz continuity of boundary data always
implies the Lipschitz continuity of the solutions if $\vartheta > 0$,
but does not when $\vartheta \leq 0$.
\end{abstract}

\keywords{Lipschitz continuity; Dirichlet boundary value problem; invariant laplacians}

\maketitle

\section{Introduction}

Let $\mathbb{B}^n$ be the open unit ball in $\mathbb{R}^n$ ($n\geq 2$) and $\mathbb{S}^{n-1}$ the unit sphere.

It is known, even for $n=2$, that the Lipschitz continuity of the boundary data $\phi:\mathbb{S}^{n-1}\to \mathbb{R}^n$
does not imply the Lipschitz continuity of the solution of the Dirichlet boundary value problem
\begin{equation}\label{eqn:Dirichlet}
\begin{cases} \Delta u = 0, & \text{in }\, \mathbb{B}^n,\\
u=\phi, & \text{on }\, \mathbb{S}^{n-1}.
\end{cases}
\end{equation}
See \cite[Example 1]{AKM08} for a counterexample. In \cite{AKM08}, the authors also showed that
the Lipschitz continuity of $\phi$ implies the Lipschitz continuity of the solution
provided that the harmonic extension $P[\phi]$ of $\phi$ is a $K$-quasiregular mapping.
Kalaj \cite{Kal08} obtained a related result, but under additional assumption of $C^{1,\alpha}$ regularity of
$\phi$.

Recently, Chen et al. \cite{CHRW18} investigate solutions of the hyperbolic Poisson equation $\Delta_h u = \psi$,
where
\[
\Delta_{h} u(x) := ( 1 - | x | ^ { 2 })^2  \Delta u(x) + 2 (n-2) (1-|x|^2)  \sum_{j=1}^n x_{j} \frac { \partial } { \partial x _ { j } } u(x)
\]
is the Laplace-Beltrami operator on the unit ball $\mathbb{B}^n$.
Among the other things, the authors showed that, in contrast with the above,
the Lipschitz continuity of $\phi$ implies the Lipschitz continuity of the solution
of the Dirichlet boundary value problem
\begin{equation}\label{eqn:HypDir}
\begin{cases}
\Delta_h u=0, & \text{in }  \,  \mathbb{B}^n,\\
 u=\phi,        & \text{on } \, \mathbb{S}^{n-1},
\end{cases}
\end{equation}
with no other assumption than that $n\geq 3$. See \cite[Theorem 1.2]{CHRW18}.

This short note is motivated by an attempt to understand the distinction between the two boundary value problems.
To this end, we consider a family of differential operators
\[
\Delta_{\vartheta} u(x) := (1-|x|^2) \bigg\{ \frac {1-|x|^2} {4} \Delta u(x) + \vartheta \sum_{j=1}^n x_{j} \frac {\partial u} {\partial x_j} (x)
+ \vartheta \left( \frac {n}{2}-1- \vartheta \right) u(x)\bigg\}.
\]

We call $\Delta_{\vartheta}$ the (M\"obius) invariant Laplacians, since
\[
\Delta_{\vartheta} \left\{\left(\mathrm{det}\, \psi^{\prime}(x)\right)^{\frac {n-2-2\theta}{2n}} f(\psi(x))\right\}
= \left(\mathrm{det} \psi^{\prime}(x)\right)^{\frac {n-2-2\theta}{2n}} \left(\Delta_{\vartheta} f\right)(\psi(x))
\]
for every $f\in C^2(\mathbb{B}^n)$ and for every M\"obius transformation $\psi$ (see \cite[Proposition 3.2]{LP09}).
These differential operators are closely related to polyharmonic functions. In particular, the differential operator
$L_{\vartheta} := \frac {4} {1-|x|^2} \Delta_{\vartheta}$ plays a crucial role in the modified Almansi representation
for polyharmonic functions in \cite{LPS21} as well as in the cellular decomposition theorem for polyharmonic functions in
\cite{BH14, LPS21}. We refer the reader to \cite{LP04, LP09, Olo14} for more information about these differential operators.

We investigate the Dirichlet problem
\begin{equation}\label{eqn:alphaDirichlet}
\begin{cases} \Delta_{\vartheta} u = 0, & \text{ in }\, \mathbb{B}^n,\\
u=\phi, & \text{ on }\, \mathbb{S}^{n-1},
\end{cases}
\end{equation}
which includes as special cases the Dirichlet problems \eqref{eqn:Dirichlet} and \eqref{eqn:HypDir}, in view of
\[
\Delta_{0} =\frac {(1 -|x|^2)^2}{4} \Delta \quad \text{and} \quad
\Delta_{\frac {n}{2}-1} = \frac {1}{4} \, \Delta_h.
\]
Unlike in \cite{AKM08, CHRW18}, for convenience, we work with functions instead of mappings.

\vskip4pt

Our main result is as follows.

\vskip4pt

\begin{theorem}\label{thm:main}
Suppose $n \geq 2$ and $\vartheta > -1/2$.
\begin{enumerate}
\item[(i)]
If $\vartheta \leq 0$, there exists a Lipschitz continuous function $\phi: \mathbb{S}^{n-1} \to \mathbb{R}$
such that the solution of \eqref{eqn:alphaDirichlet}
is not Lipschitz continuous.
\item[(ii)]
If $\vartheta > 0$, the Lipschitz continuity of boundary data always
implies the Lipschitz continuity of the solutions. 
\end{enumerate}
\end{theorem}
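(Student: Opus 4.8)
The plan is to solve \eqref{eqn:alphaDirichlet} by an explicit Poisson integral and then read the boundary regularity off the kernel. A direct computation of $\Delta_\vartheta$ on functions $(1-|x|^2)^{a}|x-\zeta|^{-b}$ (writing $\rho=1-|x|^2$, $s=|x-\zeta|^2$ and collecting the monomials $\rho^{a-1}s^{-c}$, $\rho^{a}s^{-c}$, $\rho^{a+1}s^{-c-1}$) shows that the only choice killing all three coefficients is $a=1+2\vartheta$, $c=\tfrac n2+\vartheta$; hence
\[
P_\vartheta(x,\zeta)=c_\vartheta\,\frac{(1-|x|^2)^{1+2\vartheta}}{|x-\zeta|^{\,n+2\vartheta}}
\]
satisfies $\Delta_\vartheta P_\vartheta(\cdot,\zeta)=0$. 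The hypothesis $\vartheta>-1/2$ is exactly what turns $P_\vartheta$ into an approximate identity: near $\zeta=x^*:=x/|x|$ its mass is governed by $\int_0^\infty t^{n-2}(1+t^2)^{-(n+2\vartheta)/2}\,dt$, which converges iff $\vartheta>-1/2$. With $c_\vartheta$ fixed by this mass, $u(x)=\int_{\mathbb{S}^{n-1}}P_\vartheta(x,\zeta)\phi(\zeta)\,d\sigma(\zeta)$ solves \eqref{eqn:alphaDirichlet} and belongs to $C(\overline{\mathbb{B}^n})$ for $\phi\in C(\mathbb{S}^{n-1})$.

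For (ii), by convexity of the ball it suffices to bound $\nabla u$ uniformly. Because constants are not $\Delta_\vartheta$-harmonic when $\vartheta\neq0$, I cannot subtract $\phi(x^*)$ for free; instead I split
\[
\nabla u(x)=\phi(x^*)\,\nabla u_1(x)+\int_{\mathbb{S}^{n-1}}\nabla_x P_\vartheta(x,\zeta)\,[\phi(\zeta)-\phi(x^*)]\,d\sigma(\zeta),
\]
where $u_1$ is the solution for $\phi\equiv1$. Setting $d=1-|x|$ and $\psi=\angle(\zeta,x^*)$ one has the pointwise bound $|\nabla_xP_\vartheta|\lesssim d^{2\vartheta}(d^2+\psi^2)^{-(n+2\vartheta)/2}+d^{1+2\vartheta}(d^2+\psi^2)^{-(n+2\vartheta+1)/2}$; feeding in $|\phi(\zeta)-\phi(x^*)|\le L\,|\zeta-x^*|\asymp L\psi$ and rescaling $\psi=dt$, the powers of $d$ cancel and the decisive contribution is
\[
\int_{\mathbb{S}^{n-1}}|\nabla_xP_\vartheta|\,|\zeta-x^*|\,d\sigma(\zeta)\ \lesssim\ \int_0^\infty\frac{t^{\,n-1}}{(1+t^2)^{(n+2\vartheta)/2}}\,dt,
\]
which is finite precisely when $\vartheta>0$. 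For the first term I compute $u_1$ from $\int_{\mathbb{S}^{n-1}}|x-\zeta|^{-(n+2\vartheta)}d\sigma(\zeta)=\omega_{n-1}\,\hyperg{\tfrac n2+\vartheta}{1+\vartheta}{\tfrac n2}{|x|^2}$ and apply the connection formula at $|x|^2=1$: since the local exponents of the radial equation at $r=1$ are $0$ and $1+2\vartheta$, one finds $u_1(r)=\alpha+\beta(1-r^2)+\cdots+\gamma(1-r^2)^{1+2\vartheta}+\cdots$, so $u_1'(r)=O(1)+O((1-r)^{2\vartheta})$ is bounded for $\vartheta>0$. Hence $|\nabla u|$ is bounded and $u$ is Lipschitz.

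For (i) the same expansion supplies the counterexamples. When $-1/2<\vartheta<0$ I take the (Lipschitz) constant data $\phi\equiv1$; the singular coefficient above is
\[
\gamma=c_\vartheta\,\omega_{n-1}\,\frac{\Gamma(\tfrac n2)\,\Gamma(-1-2\vartheta)}{\Gamma(-\vartheta)\,\Gamma(\tfrac n2-1-\vartheta)}\neq0
\]
throughout this range, so $u_1'(r)\asymp(1-r)^{2\vartheta}\to\infty$ as $r\to1^-$ and $u_1$ is not Lipschitz, although its boundary data is constant. In the borderline case $\vartheta=0$ (the ordinary Laplacian, for which $\phi\equiv1$ does extend to the constant solution) I instead take $\phi(\zeta)=|\zeta-e_1|$, which is globally Lipschitz; now constants are harmonic, $\int P_0\,d\sigma=1$, and the same kernel computation gives $u(re_1)\asymp(1-r)\log\frac1{1-r}$, whence $\partial_r u(re_1)\asymp\log\frac1{1-r}\to\infty$, recovering the classical obstruction for the harmonic extension.

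The main obstacle is the gradient estimate in (ii): everything rests on the exact cancellation of the powers of $d$ that collapses the boundary behavior onto the single scale-invariant integral $\int_0^\infty t^{n-1}(1+t^2)^{-(n+2\vartheta)/2}dt$, reinforced by the fact that the radial solution $u_1$ carries a boundary term of order $(1-r)^{1+2\vartheta}$. The threshold $\vartheta=0$ appears twice — as the convergence borderline of that integral and as the exponent at which $(1-r)^{1+2\vartheta}$ first has a bounded derivative — and this coincidence is what separates the two regimes.
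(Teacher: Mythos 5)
Your argument is correct, and its skeleton is the same as the paper's: derive the Poisson kernel $P_\vartheta$, reduce Lipschitz continuity to a uniform gradient bound, split off the term $\phi(x^*)\nabla u_1$ forced by the non-harmonicity of constants, and estimate the remainder using $|\phi(\zeta)-\phi(x^*)|\le L|\zeta-x^*|$. The differences are in the supporting lemmas. For the remainder you rescale in polar coordinates ($\psi=dt$) and land on the scale-invariant integral $\int_0^\infty t^{n-1}(1+t^2)^{-(n+2\vartheta)/2}\,dt$, whose convergence threshold is exactly $\vartheta>0$; the paper instead proves the equivalent weighted estimate $\int_{\mathbb{S}^{n-1}}|\zeta-e_1|^{q}\,|\zeta-re_1|^{-(n-1+p)}\,d\sigma\lesssim(1-r)^{q-p}$ and applies it with $q=1,2$. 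For the $u_1$ term you invoke the Gauss connection formula at $z=1$, whereas the paper reads boundedness off the decay rate $j^{-1-2\vartheta}$ of the Taylor coefficients of $\hyperg{1-\vartheta}{\frac n2-\vartheta}{\frac n2+1}{|x|^2}$ after an Euler transformation; the two are interchangeable, but note that when $1+2\vartheta$ is an integer $\ge 2$ your expansion $\alpha+\beta(1-r^2)+\cdots+\gamma(1-r^2)^{1+2\vartheta}+\cdots$ acquires logarithmic corrections --- harmless, since $(1-r)^{2\vartheta}\log\frac{1}{1-r}\to 0$ for $\vartheta>0$, but it should be acknowledged. Two smaller points: when differentiating your split formula you must freeze $\zeta_0=x^*$ before applying $\nabla$ (or, as the paper does, rotate so that $x=re_1$ and treat $\phi(e_1)$ as a constant), since as written $\phi(x^*)$ depends on $x$; and for $\vartheta=0$ you build an explicit counterexample $\phi(\zeta)=|\zeta-e_1|$ with $u(re_1)\asymp(1-r)\log\frac{1}{1-r}$ where the paper simply cites \cite[Example 1]{AKM08} --- your version is self-contained, and the non-Lipschitz conclusion there is obtained most cleanly from the difference quotient $\frac{u(re_1)-u(e_1)}{1-r}\to\infty$ rather than by differentiating the asymptotics.
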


\begin{remark}
When $\vartheta=n/2-1$, $\Delta_{\vartheta} = \frac {1}{4} \, \Delta_h$ and the assumption $\vartheta>0$ reads $n>2$.
Theorem \ref{thm:main} explains why the assumption $n\geq 3$ in \cite[Theorem 1.2]{CHRW18} is necessary.
\end{remark}

\section{Preliminaries}


In this section, we gather some technical lemmas needed for Sections 3 and 4, and fix our
notation.

According to \cite{LP04}, the Poisson kernel associated to $\Delta_{\vartheta}$ is given by
\[
P_{\vartheta}(x,\zeta):= c_{n,\vartheta} \frac {(1-|x|^2)^{1+2\vartheta}} {|x-\zeta|^{n+2\vartheta}},
\qquad (x,\zeta)\in \mathbb{B}^n\times \mathbb{S}^{n-1},
\]
with $c_{n,\vartheta} := \frac {\Gamma\left(\frac{n}{2}+\vartheta \right) \Gamma(1+\vartheta)} {\Gamma\left(\frac {n}{2} \right)
\Gamma(1+2\vartheta)}$ and the Poisson integral of a function $\phi\in C(\mathbb{S}^{n-1})$ is given by
\begin{equation}\label{eqn:alphaPoissonintegral}
\mathcal{P}_{\vartheta}[\phi](x):= \int_{\mathbb{S}^{n-1}} P_{\vartheta}(x,\zeta) \phi(\zeta) d\sigma(\zeta),
\end{equation}
where $\sigma$ is the surface measure on $\mathbb{S}^{n-1}$ normalized so that $\sigma(\mathbb{S}^{n-1})=1$.

\vskip4pt

We record the following from \cite[Theorem 2.4]{LP04}.

\vskip4pt
\begin{lemma}
The Dirichlet problem \eqref{eqn:alphaDirichlet}
has a solution for all $\phi\in C(\mathbb{S}^{n-1})$ if and only if $\vartheta>-1/2$.
Moreover, when $\vartheta>-1/2$, the solution is given by $u=\mathcal{P}_{\vartheta}[\phi]$.
\end{lemma}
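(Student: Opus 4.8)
The plan is to establish both implications by working with the candidate solution $u=\mathcal{P}_{\vartheta}[\phi]$ for the sufficiency, and by analyzing the radial reduction of $\Delta_{\vartheta}$ for the necessity. For the sufficiency I would reduce everything to showing that, when $\vartheta>-1/2$, the family $\{P_{\vartheta}(x,\cdot)\}_{x\in\mathbb{B}^n}$ is a positive approximate identity on $\mathbb{S}^{n-1}$ as $|x|\to 1$. This splits into three verifications: (a) each $P_{\vartheta}(\cdot,\zeta)$ is annihilated by $\Delta_{\vartheta}$ in $\mathbb{B}^n$, so that $\Delta_{\vartheta}\mathcal{P}_{\vartheta}[\phi]=0$ by differentiation under the integral sign; (b) $P_{\vartheta}\geq 0$ and $\int_{\mathbb{S}^{n-1}}P_{\vartheta}(x,\zeta)\,d\sigma(\zeta)\to 1$ as $|x|\to 1$; and (c) $P_{\vartheta}(x,\cdot)$ concentrates near any fixed boundary point $\zeta_0$ as $x\to\zeta_0$. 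Granting (a)--(c), the standard approximate-identity argument gives $\lim_{x\to\zeta_0}\mathcal{P}_{\vartheta}[\phi](x)=\phi(\zeta_0)$ for every $\phi\in C(\mathbb{S}^{n-1})$.

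For (a) I would either verify $\Delta_{\vartheta}\{(1-|x|^2)^{1+2\vartheta}|x-\zeta|^{-(n+2\vartheta)}\}=0$ by a direct differentiation, or, more conceptually, transport a radial $\Delta_{\vartheta}$-harmonic function onto $P_{\vartheta}(\cdot,\zeta)$ using the Möbius covariance recorded in the introduction, choosing $\psi$ to be an automorphism of $\mathbb{B}^n$ carrying the origin to a point on the radius through $\zeta$. The crux is (b). Writing $x=r\omega$ and using the zonal expansion of $|x-\zeta|^{-(n+2\vartheta)}$, the mass integral reduces to a Gauss hypergeometric function,
\[
\int_{\mathbb{S}^{n-1}}\frac{d\sigma(\zeta)}{|x-\zeta|^{n+2\vartheta}}=\hyperg{\tfrac n2+\vartheta}{1+\vartheta}{\tfrac n2}{r^2}.
\]
Applying the Euler transformation $\hyperg{\alpha}{\beta}{\gamma}{z}=(1-z)^{\gamma-\alpha-\beta}\hyperg{\gamma-\alpha}{\gamma-\beta}{\gamma}{z}$ absorbs the prefactor $(1-|x|^2)^{1+2\vartheta}$ exactly, giving
\[
\int_{\mathbb{S}^{n-1}}P_{\vartheta}(x,\zeta)\,d\sigma(\zeta)=c_{n,\vartheta}\,\hyperg{-\vartheta}{\tfrac n2-1-\vartheta}{\tfrac n2}{r^2}.
\]
Letting $r\to 1$ I would invoke Gauss's summation theorem, valid precisely because $\tfrac n2-(-\vartheta)-(\tfrac n2-1-\vartheta)=1+2\vartheta>0$ when $\vartheta>-1/2$; the resulting value is exactly $c_{n,\vartheta}^{-1}$, so the total mass tends to $1$. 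Positivity is then the statement $c_{n,\vartheta}>0$, which holds on $\vartheta>-1/2$ since every Gamma factor is positive there.

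Part (c) is immediate: for $\zeta$ bounded away from $\zeta_0$ the denominator $|x-\zeta|$ stays bounded below while $(1-|x|^2)^{1+2\vartheta}\to 0$, which is again exactly where the sign condition $1+2\vartheta>0$ is used. Combining (b) and (c) in the usual decomposition $\mathcal{P}_{\vartheta}[\phi](x)-\phi(\zeta_0)=\int P_{\vartheta}(x,\zeta)\big(\phi(\zeta)-\phi(\zeta_0)\big)\,d\sigma(\zeta)+\phi(\zeta_0)\big(\int P_{\vartheta}\,d\sigma-1\big)$ and splitting the first integral into a neighborhood of $\zeta_0$ (controlled by uniform continuity of $\phi$ and boundedness of the mass) and its complement (controlled by concentration) yields the boundary limit, completing the sufficiency.

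For the necessity I would argue by contradiction via separation of variables. Substituting a radial $u=f(|x|)$ and setting $z=|x|^2$ turns $\Delta_{\vartheta}u=0$ into the hypergeometric equation with parameters $a=-\vartheta$, $b=\tfrac n2-1-\vartheta$, $c=\tfrac n2$, whose exponents at the regular singular point $z=1$ are $0$ and $c-a-b=1+2\vartheta$. Hence the unique (up to scale) radial solution regular at the origin is $\hyperg{-\vartheta}{\tfrac n2-1-\vartheta}{\tfrac n2}{z}$, and when $\vartheta\leq -1/2$ one has $c-a-b=1+2\vartheta\leq 0$; since for $n\geq 2$ and $\vartheta\leq -1/2$ neither $-\vartheta$ nor $\tfrac n2-1-\vartheta$ is a non-positive integer, the series does not terminate and therefore diverges as $z\to 1$. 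Taking $\phi\equiv 1$, any continuous solution could be averaged over the rotation group $O(n)$ to produce a continuous radial solution; regularity at the origin would force it to be a multiple of this divergent series, contradicting continuity up to $\mathbb{S}^{n-1}$. I expect the main obstacle to be step (b): identifying the normalization through the hypergeometric evaluation and recognizing that the convergence of Gauss's theorem at $z=1$, the positivity of $c_{n,\vartheta}$, and the boundary concentration are all governed by the single quantity $1+2\vartheta$; once this is in place, the approximate-identity conclusion and the radial divergence argument are routine.
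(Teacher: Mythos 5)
The paper does not actually prove this lemma: it is quoted directly from \cite[Theorem 2.4]{LP04}, so there is no in-paper argument to compare yours against. Judged on its own merits, your reconstruction is the standard one and the computations that matter all check out: applying \eqref{eqn:keylem0} with $\lambda=\frac n2+\vartheta$ gives $\hyperg{\frac n2+\vartheta}{1+\vartheta}{\frac n2}{r^2}$ for the unweighted mass integral; the Euler transformation \eqref{eqn:euler} has exponent $c-a-b=-1-2\vartheta$, so the prefactor $(1-|x|^2)^{1+2\vartheta}$ is absorbed exactly and one lands on \eqref{eqn:intknl}; Gauss's summation applies precisely when $1+2\vartheta>0$ and evaluates to $c_{n,\vartheta}^{-1}$, while $c_{n,\vartheta}>0$ on that range since all three Gamma arguments are positive. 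The radial reduction is also correct: with $z=|x|^2$ one obtains the hypergeometric equation with $a=-\vartheta$, $b=\frac n2-1-\vartheta$, $c=\frac n2$, and for $\vartheta\le -1/2$ the coefficients of $\hyperg{-\vartheta}{\frac n2-1-\vartheta}{\frac n2}{z}$ are positive of order $j^{-2-2\vartheta}$ with $-2-2\vartheta\ge -1$, so the series increases to $+\infty$ as $z\to 1^-$, which kills any rotation-averaged solution for $\phi\equiv 1$.

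Three steps are stated rather than proved and would need to be filled in. First, (a) is not free: you must actually verify $\Delta_\vartheta P_\vartheta(\cdot,\zeta)=0$, either by direct differentiation or by carefully matching the weight $(\det\psi'(x))^{\frac{n-2-2\vartheta}{2n}}$ in the covariance identity and then passing to a boundary limit; neither is hard, but the covariance route in particular requires checking that the exponent works out to produce exactly $(1-|x|^2)^{1+2\vartheta}|x-\zeta|^{-(n+2\vartheta)}$. Second, the $O(n)$-averaging needs a one-line justification that the average still satisfies the equation (interior elliptic estimates give locally uniform control of the derivatives of $u\circ T$ in $T$, so one may differentiate under the integral). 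Third, the claim that continuity at the origin forces a multiple of the analytic Frobenius solution relies on the second exponent $1-\frac n2\le 0$ at $z=0$ and, in the integer-difference cases, on the presence of a genuine $z^{1-n/2}$ or $\log z$ singularity in the second solution; this holds here because $a=-\vartheta>0$ prevents termination, but it should be said. None of these is a fatal gap; the argument goes through and is presumably close to the proof in \cite{LP04}.
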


\vskip4pt

Straightforward calculation yields the following.
\vskip4pt

\begin{lemma}
For $k=1,\ldots, n$, we have
\begin{align}\label{eqn:diffPoissonknl}
\frac{\partial P_{\vartheta}}{\partial x_k} (x, \zeta) ~=~& -2(1+2\vartheta) c_{n,\vartheta} \frac{(1-|x|^2)^{2\vartheta}x_k|x-\zeta |^2} {|x-\zeta |^{n+2\vartheta+2}}\\
&\quad -~ (n+2\vartheta) c_{n,\vartheta} \frac{(1-|x|^2)^{1+2\vartheta}(x_k-\zeta_k)}{|x-\zeta |^{n+2\vartheta+2}}.\notag
\end{align}
\end{lemma}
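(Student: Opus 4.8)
The plan is to differentiate the two factors of the Poisson kernel separately and then recombine them via the product rule. Writing $P_{\vartheta}(x,\zeta) = c_{n,\vartheta}\, A(x)\, B(x)$ with $A(x) = (1-|x|^2)^{1+2\vartheta}$ and $B(x) = |x-\zeta|^{-(n+2\vartheta)}$, I would treat the constant $c_{n,\vartheta}$ as inert and compute $\partial_{x_k} A$ and $\partial_{x_k} B$ by the chain rule.

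For the first factor, since $\partial_{x_k}(1-|x|^2) = -2x_k$, the chain rule gives $\partial_{x_k} A = -2(1+2\vartheta)\, x_k\, (1-|x|^2)^{2\vartheta}$. For the second factor, using $\partial_{x_k} |x-\zeta| = (x_k-\zeta_k)/|x-\zeta|$ (which follows from $|x-\zeta| = (\sum_j (x_j-\zeta_j)^2)^{1/2}$), I obtain $\partial_{x_k} B = -(n+2\vartheta)(x_k-\zeta_k)\,|x-\zeta|^{-(n+2\vartheta+2)}$. Substituting these into the product rule $\partial_{x_k} P_{\vartheta} = c_{n,\vartheta}\bigl( (\partial_{x_k}A)\, B + A\, (\partial_{x_k}B)\bigr)$ yields the two summands on the right-hand side of \eqref{eqn:diffPoissonknl}.

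The only point requiring any care is cosmetic: the first summand naturally appears with denominator $|x-\zeta|^{n+2\vartheta}$, and to display it with the common denominator $|x-\zeta|^{n+2\vartheta+2}$ shared by both terms one simply multiplies numerator and denominator by $|x-\zeta|^2$, which is exactly the factor appearing in the stated numerator. There is no genuine obstacle here; the computation is entirely elementary, and the only thing to track is the bookkeeping of the exponents $1+2\vartheta$, $n+2\vartheta$, and $n+2\vartheta+2$, together with the correct sign from differentiating the negative powers.
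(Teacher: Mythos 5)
Your computation is correct and is precisely the ``straightforward calculation'' the paper alludes to without writing out (the lemma is stated with no proof beyond that remark). The product rule and chain rule steps, the exponent bookkeeping, and the rewriting of the first term over the common denominator $|x-\zeta|^{n+2\vartheta+2}$ all check out.
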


A number of hypergeometric functions will appear throughout.
We use the classical notation 
\begin{equation*}\label{eq:hypergdefin}
\hyperg{a}{b}{c}{\lambda}:=\sum_{k=0}^{\infty}\frac{(a)_k(b)_k}{(c)_k}\frac{\lambda^k}{k!}
\end{equation*}
with $c\neq 0, -1,-2,\ldots$, where $(a)_k$ stands for the Pochhammer
symbol, which is defined as
\[
(a)_k := \begin{cases}
1,& \text{ if } k=0,\\
a(a+1)\ldots(a+k-1), &\text{ if } k\geq 1.
\end{cases}
\]

We refer to \cite[Chapter 2]{AAR99} for the properties of these
functions. Here, we only record two formulas for later reference.
\begin{align}
\hyperg{a}{b}{c}{\lambda} ~=~& (1-\lambda)^{c-a-b} \hyperg{c-a}{c-b}{c}{\lambda}. \label{eqn:euler}\\
\frac {d}{d\lambda} \hyperg{a}{b}{c}{\lambda} ~=~& \frac {a b}{c} \hyperg {a+1}{b+1}{c+1}{\lambda}.
\label{eqn:diffhyperg}
\end{align}

\begin{lemma}
We have
\begin{equation}\label{eqn:intknl}
\int_{\mathbb{S}^{n-1}} P_\vartheta(x, \zeta) ~d\sigma(\zeta)
~=~ c_{n,\vartheta}\, \hyperg{-\vartheta}{\frac{n}{2}-1-\vartheta}{\frac n2}{|x|^2}
\end{equation}
and for $k=1,\ldots,n$,
\begin{equation}\label{eqn:intpartialknl}
\int_{\mathbb{S}^{n-1}} \frac {\partial P_{\vartheta}}{\partial x_k} (x, \zeta) ~d\sigma(\zeta)
~=~ C(n,\vartheta)\, \hyperg{1-\vartheta}{\frac{n}{2}-\vartheta}{\frac n2+1}{|x|^2} x_k,
\end{equation}
with
\[
C(n,\vartheta):= \frac{-2\vartheta(n -2-2\vartheta)}{n}\, c_{n,\vartheta}.
\]
\end{lemma}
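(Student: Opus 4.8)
The plan is to prove \eqref{eqn:intknl} by reducing it to a one-dimensional integral, and then to obtain \eqref{eqn:intpartialknl} from it by differentiating under the integral sign.

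For \eqref{eqn:intknl}, I would first use rotational symmetry. Since $|x-\zeta|^2 = 1 - 2\,x\cdot\zeta + |x|^2$ for $\zeta\in\mathbb{S}^{n-1}$, the integrand depends on $\zeta$ only through $x\cdot\zeta$, so with $r=|x|$ the standard slicing formula for the normalized surface measure reduces the left-hand side to
\[
c_{n,\vartheta}\,(1-r^2)^{1+2\vartheta}\,\frac{1}{N}\int_{-1}^{1}\bigl(1 - 2rt + r^2\bigr)^{-\frac{n}{2}-\vartheta}(1-t^2)^{\frac{n-3}{2}}\,dt,
\]
where $N=\int_{-1}^{1}(1-t^2)^{(n-3)/2}\,dt$ is a Beta value. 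Next I would substitute $t = 1 - 2u$, which turns $1-2rt+r^2$ into $(1-r)^2 + 4ru$ and $1-t^2$ into $4u(1-u)$; the resulting integral is exactly an instance of Euler's integral representation of the hypergeometric function, and up to an explicit Gamma-factor it evaluates to $(1-r)^{-n-2\vartheta}\,\hyperg{\frac n2 + \vartheta}{\frac{n-1}{2}}{n-1}{-\frac{4r}{(1-r)^2}}$.

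To reach the stated argument $|x|^2 = r^2$, I would then apply the quadratic transformation
\[
\hyperg{a}{b}{2b}{\tfrac{4z}{(1+z)^2}} = (1+z)^{2a}\,\hyperg{a}{a-b+\tfrac12}{b+\tfrac12}{z^2}
\]
(see, e.g., \cite{AAR99}) with $z=-r$, $a=\frac n2 + \vartheta$ and $b=\frac{n-1}{2}$. This cancels the factor $(1-r)^{-n-2\vartheta}$ and leaves $\hyperg{\frac n2 + \vartheta}{\vartheta+1}{\frac n2}{r^2}$. Finally, Euler's transformation \eqref{eqn:euler} rewrites this as $(1-r^2)^{-1-2\vartheta}\,\hyperg{-\vartheta}{\frac n2 - 1 - \vartheta}{\frac n2}{r^2}$, and the factor $(1-r^2)^{-1-2\vartheta}$ exactly cancels the prefactor $(1-r^2)^{1+2\vartheta}$, producing \eqref{eqn:intknl}. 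The main obstacle I anticipate is the bookkeeping of constants: I must check that the accumulated Beta and Gamma factors (including $N$) collapse to $1$, so that the overall constant is precisely $c_{n,\vartheta}$, which should follow from the Legendre duplication formula, and that the parameters are matched correctly across the two transformations.

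For \eqref{eqn:intpartialknl}, rather than integrate the explicit expression \eqref{eqn:diffPoissonknl}---which would force the extra, messier evaluation of $\int_{\mathbb{S}^{n-1}}(x_k-\zeta_k)\,|x-\zeta|^{-n-2\vartheta-2}\,d\sigma(\zeta)$---I would simply differentiate \eqref{eqn:intknl} with respect to $x_k$. This is legitimate because, for $x$ ranging over a compact subset of $\mathbb{B}^n$, the kernel $P_\vartheta(\cdot,\zeta)$ and its first-order derivatives are bounded uniformly in $\zeta\in\mathbb{S}^{n-1}$, so the differentiation may be carried under the integral sign. Using $\partial_{x_k}|x|^2 = 2x_k$ together with the derivative formula \eqref{eqn:diffhyperg} raises each hypergeometric parameter by one and multiplies by
\[
c_{n,\vartheta}\cdot\frac{(-\vartheta)\bigl(\frac n2 - 1 - \vartheta\bigr)}{\frac n2}\cdot 2 = \frac{-2\vartheta(n-2-2\vartheta)}{n}\,c_{n,\vartheta} = C(n,\vartheta),
\]
which is exactly \eqref{eqn:intpartialknl}. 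This step is essentially automatic once \eqref{eqn:intknl} is in hand.
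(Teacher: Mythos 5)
Your proposal is correct, and its overall architecture matches the paper's: both proofs obtain \eqref{eqn:intknl} by identifying $\int_{\mathbb{S}^{n-1}}|x-\zeta|^{-n-2\vartheta}\,d\sigma(\zeta)$ with $\hyperg{\frac n2+\vartheta}{1+\vartheta}{\frac n2}{|x|^2}$ and then applying Euler's transformation \eqref{eqn:euler} to cancel the factor $(1-|x|^2)^{1+2\vartheta}$, and both obtain \eqref{eqn:intpartialknl} by differentiating \eqref{eqn:intknl} and invoking \eqref{eqn:diffhyperg}; your constant computation $2\cdot\frac{(-\vartheta)(\frac n2-1-\vartheta)}{n/2}=\frac{-2\vartheta(n-2-2\vartheta)}{n}$ is the right one. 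The only substantive difference is that the paper simply cites the integral identity \eqref{eqn:keylem0} from \cite[Lemma 2.1]{LP04}, whereas you re-derive it from scratch via the slicing reduction to $\int_{-1}^1(1-2rt+r^2)^{-\frac n2-\vartheta}(1-t^2)^{\frac{n-3}{2}}dt$, Euler's integral representation, and the quadratic transformation $\hyperg{a}{b}{2b}{\frac{4z}{(1+z)^2}}=(1+z)^{2a}\hyperg{a}{a-b+\frac12}{b+\frac12}{z^2}$ with $z=-r$. That chain is sound (the parameters match, and the accumulated constants do collapse to $1$ by the Legendre duplication formula, as you anticipated), so your version buys self-containedness at the cost of length; citing \eqref{eqn:keylem0} directly, as the paper does, shortcuts precisely this portion and nothing else.
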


\begin{proof}
The first identity follows from the formula (\cite[Lemma 2.1]{LP04})
\begin{equation}\label{eqn:keylem0}
\int_{\mathbb{S}^{n-1}} \frac{d\sigma(\zeta)}{|x-\zeta|^{2\lambda}} ~=~ \hyperg{\lambda}{\lambda-\frac{n}{2}+1}{\frac n2}{|x|^2},
\quad x\in \mathbb{B}^n,
\end{equation}
and \eqref{eqn:euler}. The identity \eqref{eqn:intpartialknl} follows by
differentiating both sides of \eqref{eqn:intknl} and applying \eqref{eqn:diffhyperg}.
\end{proof}

%
\begin{lemma}\label{cor:bddintegral}
When $\vartheta > 0$, the functions
\begin{equation*}
x ~\longmapsto~ \int_{\mathbb{S}^{n-1}} \frac {\partial P_{\vartheta}}{\partial x_k} (x, \zeta) ~d\sigma(\zeta),
\quad k=1,\ldots,n,
\end{equation*}
are all bounded on $\mathbb{B}^n$. 
\end{lemma}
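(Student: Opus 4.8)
The plan is to read the answer off directly from the preceding lemma and then bound a single hypergeometric factor.

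First I would apply the identity \eqref{eqn:intpartialknl}, which already evaluates each integral in closed form as
\[
\int_{\mathbb{S}^{n-1}} \frac{\partial P_{\vartheta}}{\partial x_k}(x,\zeta)\, d\sigma(\zeta)
= C(n,\vartheta)\, \hyperg{1-\vartheta}{\frac n2-\vartheta}{\frac n2+1}{|x|^2}\, x_k.
\]
Since $|x_k| \le |x| < 1$ on $\mathbb{B}^n$, the factor $x_k$ contributes nothing to the question of boundedness, and the problem reduces to showing that the function $t \mapsto \hyperg{1-\vartheta}{\frac n2-\vartheta}{\frac n2+1}{t}$ is bounded on $[0,1)$.

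The decisive quantity is the exponent $c-a-b$ attached to this ${}_2F_1$. With $a = 1-\vartheta$, $b = \frac n2-\vartheta$ and $c = \frac n2+1$, a short computation gives $c-a-b = 2\vartheta$, which is strictly positive exactly because $\vartheta > 0$. This sign governs the endpoint behavior: the general term of the series is asymptotic to $k^{a+b-c-1} = k^{-1-2\vartheta}$, which is summable when $\vartheta > 0$, so the series converges (absolutely) at $t=1$. Consequently $\hyperg{1-\vartheta}{\frac n2-\vartheta}{\frac n2+1}{\cdot}$ extends continuously to $[0,1]$ and is in particular bounded on $[0,1)$, finishing the proof.

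There is no genuine obstacle here: the entire content lies in identifying $c-a-b = 2\vartheta$ and recognizing that the hypothesis $\vartheta > 0$ is precisely what makes this positive. It is worth noting that this computation also anticipates part (i) of Theorem \ref{thm:main}: when $\vartheta = 0$ the series diverges logarithmically as $t \to 1^-$, and when $\vartheta < 0$ it blows up like $(1-t)^{2\vartheta}$, so one cannot expect the gradient of the Poisson integral to remain bounded in those regimes.
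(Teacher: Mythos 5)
Your proposal is correct and follows essentially the same route as the paper: both reduce via \eqref{eqn:intpartialknl} to the single hypergeometric factor and observe that its series coefficients decay like $k^{-1-2\vartheta}$, hence are summable precisely when $\vartheta>0$. Your identification of $c-a-b=2\vartheta$ as the decisive quantity is a clean way to package the same computation.
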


\begin{proof}
It follows from \eqref{eqn:intpartialknl} that
\begin{align*}
\left|\int_{\mathbb{S}^{n-1}} \frac {\partial P_{\vartheta}}{\partial x_k} (x, \zeta) ~d\sigma(\zeta)\right|
~\lesssim~& \hyperg{1-\vartheta}{\frac{n}{2}-\vartheta}{\frac n2+1}{|x|^2} \\
~=~& \sum_{j=0}^{\infty} \frac {(1-\vartheta)_j (\frac{n}{2}-\vartheta)_j} {j!\, (\frac n2+1)_j} |x|^{2j}.
\end{align*}
Here and below, we use the notation $f_1\lesssim f_2$ if there exists a uniform constant $C>0$,
such $f_1\leq C f_2$. Also, $f_1\approx f_2$ means that $f_1\lesssim f_2$ and $f_2\lesssim f_1$.
Note that the coefficients in the last series are of order $j^{-1-2\vartheta}$, as $j\to \infty$.
This proves the assertion.
\end{proof}


\vskip4pt

The following lemma may be well known,  we include a short proof for the convenience of the reader.

\vskip4pt

\begin{lemma}\label{lem:keylem}
Suppose that $p> q \geq 0$. We have
\begin{equation}\label{eqn:keylem}
\int_{\mathbb{S}^{n-1}} \frac{|\zeta-e_1|^{q}}{|\zeta-re_1|^{n-1+p}}\, d\sigma(\zeta) ~\lesssim~ \frac{1}{(1-r)^{p-q}},
\qquad r\in [0,1).
\end{equation}
\end{lemma}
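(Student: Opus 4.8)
The plan is to reduce the spherical integral to a one-dimensional dyadic sum by exploiting the elementary identity
\[
|\zeta - r e_1|^2 ~=~ (1-r)^2 + r\,|\zeta - e_1|^2, \qquad \zeta \in \mathbb{S}^{n-1},
\]
which follows at once by expanding both sides and using $|\zeta|=|e_1|=1$. Since the claimed bound $(1-r)^{-(p-q)}$ is $\geq 1$ for every $r\in[0,1)$ (because $p>q$ and $1-r\leq 1$), I would first dispose of the range $r \in [0, 1/2]$: there $|\zeta - re_1| \geq 1-r \geq 1/2$ while $|\zeta - e_1|\leq 2$, so the integrand is uniformly bounded and the inequality holds trivially. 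All the difficulty lives in the regime $r \in [1/2, 1)$, where the identity yields the lower bound
\[
|\zeta - re_1| ~\gtrsim~ (1-r) + |\zeta - e_1|
\]
(from $(1-r)^2+r|\zeta-e_1|^2\geq \tfrac12\big[(1-r)^2+|\zeta-e_1|^2\big]\geq \tfrac14\big[(1-r)+|\zeta-e_1|\big]^2$ when $r\geq 1/2$), so it suffices to estimate $\int_{\mathbb{S}^{n-1}} |\zeta-e_1|^q \big( (1-r) + |\zeta - e_1| \big)^{-(n-1+p)}\, d\sigma(\zeta)$.

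Next I would decompose the sphere into dyadic caps centered at $e_1$. Writing $\delta := 1-r$, set $A_0 := \{\zeta : |\zeta - e_1| \leq \delta\}$ and $A_j := \{\zeta : 2^{j-1}\delta < |\zeta - e_1| \leq 2^j\delta\}$ for $j \geq 1$; since $\operatorname{diam}\mathbb{S}^{n-1}=2$, the caps cover $\mathbb{S}^{n-1}$ and $A_j$ is empty once $2^{j-1}\delta>2$. The two ingredients needed on each piece are the standard cap-measure estimate $\sigma(\{\zeta:|\zeta - e_1| \leq t\}) \lesssim t^{n-1}$ (valid since $\sigma$ is normalized surface measure on $\mathbb{S}^{n-1}$), together with the observation that on $A_j$ one has $|\zeta - e_1| \approx 2^j \delta$ and hence $(1-r) + |\zeta - e_1| \gtrsim 2^j\delta$. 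Bounding the numerator by $(2^j\delta)^q$ (this is where $q\geq 0$ enters), the contribution of $A_j$ is
\[
\lesssim ~\frac{(2^j\delta)^q}{(2^j\delta)^{n-1+p}}\,(2^j\delta)^{n-1} ~=~ (2^j\delta)^{-(p-q)} ~=~ 2^{-j(p-q)}\,\delta^{-(p-q)}.
\]

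Summing over $j \geq 0$ produces the geometric series $\sum_{j\geq 0} 2^{-j(p-q)}$, which converges \emph{precisely} because $p - q > 0$, leaving the bound $\delta^{-(p-q)} = (1-r)^{-(p-q)}$ as desired. I expect the only genuine obstacle to be packaging the geometry correctly, namely establishing the comparison $|\zeta - re_1| \approx (1-r) + |\zeta - e_1|$ cleanly and recording the cap-measure estimate; the displayed identity makes the former painless, and everything afterward is a routine dyadic summation in which the two hypotheses $q \geq 0$ and $p > q$ enter exactly where one expects.
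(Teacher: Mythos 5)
Your argument is correct, but it proceeds by a genuinely different route from the paper. The paper's proof is a two-line reduction: it writes $|\zeta-e_1|^{q} \leq 2^q\bigl[|\zeta-re_1|^q+(1-r)^q\bigr]$ via the triangle inequality, which splits the integral into two instances of the special case $q=0$, and that base case is simply quoted from the literature (Lemma~2.9 of Jevti\'c--Pavlovi\'c). You instead prove the estimate from scratch: the identity $|\zeta-re_1|^2=(1-r)^2+r|\zeta-e_1|^2$ gives the comparison $|\zeta-re_1|\approx(1-r)+|\zeta-e_1|$ for $r\geq 1/2$, and a dyadic decomposition into caps $A_j=\{2^{j-1}\delta<|\zeta-e_1|\leq 2^j\delta\}$ together with the cap-measure bound $\sigma(\{|\zeta-e_1|\leq t\})\lesssim t^{n-1}$ reduces everything to the geometric series $\sum_j 2^{-j(p-q)}$, convergent precisely because $p>q$. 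All the steps check out (the expansion verifying the identity, the elementary treatment of $r\leq 1/2$, and the summation). What the paper's approach buys is brevity at the cost of an external reference; what yours buys is self-containedness --- in particular it reproves the $q=0$ case rather than citing it --- and it makes visible exactly where the hypotheses $q\geq 0$ and $p>q$ are used. Either proof is acceptable; yours is the one a reader could verify without opening another paper.
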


\begin{proof}
The special case $q=0$ of the lemma is well-known, see for instance \cite[Lemma 2.9]{JP99}.

For $q>0$, noting that
\[
|\zeta-e_1|^{q} \leq \left(|\zeta-r e_1|+|re_1-e_1|\right)^q \leq 2^q \big[|\zeta-r e_1|^q+(1-r)^q \big],
\]
we have
\begin{align*}
\int_{\mathbb{S}^{n-1}} \frac{|\zeta-e_1|^{q}}{|\zeta-re_1|^{n-1+p}}\, d\sigma(\zeta)
~\lesssim~& \int_{\mathbb{S}^{n-1}} \frac{d\sigma(\zeta)}{|\zeta-re_1|^{n-1+p-q}}\\
&\quad ~+~ (1-r)^q \int_{\mathbb{S}^{n-1}} \frac{d\sigma(\zeta)}{|\zeta-re_1|^{n-1+p}},
\end{align*}
and \eqref{eqn:keylem} follows from the special case $q=0$.
\end{proof}

\section{Proof of Theorem \ref{thm:main}, Part (i)}

In the case $\vartheta=0$, as is mentioned in the introduction, \cite[Example 1]{AKM08} is such an example.

Now we assume that $\vartheta<0$. Take $\phi\equiv 1$. By \eqref{eqn:intpartialknl} and \eqref{eqn:diffhyperg},
\begin{align*}
\sum_{k=1}^{n} x_k \frac {\partial}{\partial x_k} \mathcal{P}_{\vartheta}[\phi] (x)
~=~&  C(n,\vartheta)\, \hyperg{1-\vartheta}{\frac{n}{2}-\vartheta}{\frac {n}{2}+1}{|x|^2}\, |x|^2\\
=~& C(n,\vartheta)\, \hyperg{\frac{n}{2}+\vartheta}{1+\vartheta}{\frac {n}{2}+1}{|x|^2}\,
|x|^2 \, (1-|x|^2)^{2\vartheta}.
\end{align*}
Since $\vartheta<0$, by the same argument as in the proof of Lemma \ref{cor:bddintegral}, we see that the hypergeometric function in the last line is bounded both from above and below, hence
\begin{equation}\label{eqn:bdybhvr}
\sum_{k=1}^{n} x_k \frac {\partial}{\partial x_k} \mathcal{P}_{\vartheta}[\phi] (x)  ~\approx~ (1-|x|^2)^{2\vartheta}
\end{equation}
near the boundary $\mathbb{S}^{n-1}$.
Note that
\[
\big|\nabla \left(\mathcal{P}_{\vartheta}[\phi]\right) (x) \big| ~\geq~ \left| \sum_{k=1}^{n} x_k \frac {\partial}{\partial x_k} \mathcal{P}_{\vartheta}[\phi] (x) \right|.
\]
So \eqref{eqn:bdybhvr} implies that $\nabla \left(\mathcal{P}_{\vartheta}[\phi]\right)$ is unbounded in $\mathbb{B}^n$.
Consequently,  $\mathcal{P}_{\vartheta}[\phi]$ is not Lipschitz continuous in $\mathbb{B}^n$.

\vskip4pt

\begin{remark}
When $n=3$, there is a more explicit example:
\begin{align*}
\mathcal{P}_{\vartheta}[1](x) ~=~& \frac {\Gamma\left(\frac {3}{2}+\vartheta\right) \Gamma(1+\vartheta)}{\Gamma\left(\frac {3}{2}\right) \Gamma(1+2\vartheta)}
\hyperg {-\vartheta}{\frac {1}{2}-\vartheta}{\frac {3}{2}} {|x|^2} \\
=~& 2^{-1-2\vartheta} \frac {(1+|x|)^{1+2\vartheta}-(1-|x|)^{1+2\vartheta}}{|x|},
\end{align*}
which is clearly not Lipschitz continuous when $\vartheta<0$. Here, in the last equality we have used the formula (see \cite[p. 39]{MOS66})
\[
(1+z)^{1-2a}-(1-z)^{1-2a} = 2z(1-2a) \hyperg {a}{a+\frac {1}{2}} {\frac {3}{2}} {z^2}.
\]
\end{remark}

\section{Proof of Theorem \ref{thm:main}, Part (ii)}

Let $\phi$ be a Lipschitz continuous function on $\mathbb{S}^{n-1}$, i.e.,
$|\phi(\zeta)-\phi(\eta)| \leq L |\zeta-\eta|$ for some constant $L$ and all $\zeta , \eta \in \mathbb{S}^{n-1}$.

We shall show that
\begin{equation}\label{eqn:gradient}
\sup_{x\in \mathbb{B}^n} |\nabla \left(\mathcal{P}_{\vartheta}[\phi]\right) (x)| < \infty,
\end{equation}
which would clearly imply the Lipschitz continuity of $\mathcal{P}_{\vartheta}[\phi]$.

To do this, we note that
\begin{equation}\label{eqn:compobyorth}
\left|\nabla \left(\mathcal{P}_{\vartheta}[\phi]\right)\right| =
\left| \left(\nabla \left(\mathcal{P}_{\vartheta}[\phi\circ T]\right)\right)\circ T^{-1}\right|
\end{equation}
holds for every $\phi\in C(\mathbb{S}^{n-1})$ and every orthogonal transformation $T$.
Also, given any orthogonal transformation $T$, the function $\phi\circ T$ is also
Lipschitz continuous on $\mathbb{S}^{n-1}$, with the same Lipschitz constant.
%
For any $x\in \mathbb{B}^n$, we can choose an orthogonal transformation $T$ such that $T^{-1}x=re_1$ with $r=|x|$,
so that
\[
\left|\nabla \left(\mathcal{P}_{\vartheta}[\phi]\right)(x)\right| =
\left| \left(\nabla \left(\mathcal{P}_{\vartheta}[\phi\circ T]\right)\right)(re_1)\right|.
\]

We are reduced to prove the following.

\vskip8pt

\begin{claim}
$\frac{\partial\mathcal{P}_{\vartheta}[\phi]}{\partial x_k}(re_1)$, $k=1,\ldots,n$, are bounded
functions of $r$ on $[0,1)$.
\end{claim}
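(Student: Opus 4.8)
The plan is to exploit the Lipschitz hypothesis by subtracting a constant before estimating, and then to reduce everything to the kernel bound in Lemma~\ref{lem:keylem}. First I would differentiate under the integral sign to write
\[
\frac{\partial \mathcal{P}_{\vartheta}[\phi]}{\partial x_k}(re_1) = \int_{\mathbb{S}^{n-1}} \frac{\partial P_{\vartheta}}{\partial x_k}(re_1, \zeta)\, \phi(\zeta)\, d\sigma(\zeta).
\]
Since $re_1$ is the point of the radius closest to the boundary point $e_1$, the natural normalization is to subtract $\phi(e_1)$. Splitting $\phi(\zeta) = [\phi(\zeta) - \phi(e_1)] + \phi(e_1)$ breaks the derivative into two pieces. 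The piece carrying the constant is $\phi(e_1) \int_{\mathbb{S}^{n-1}} \frac{\partial P_{\vartheta}}{\partial x_k}(re_1, \zeta)\, d\sigma(\zeta)$, which is bounded in $r$ directly by Lemma~\ref{cor:bddintegral}. So the whole matter reduces to controlling the remaining piece.

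For that piece I would use the Lipschitz bound $|\phi(\zeta) - \phi(e_1)| \le L\,|\zeta - e_1|$ together with the pointwise estimate on the kernel derivative coming from \eqref{eqn:diffPoissonknl}. Evaluating at $x = re_1$ and using $|x_k| \le 1$, $|x_k - \zeta_k| \le |re_1 - \zeta|$, and $1 - r^2 \approx 1 - r$, one obtains
\[
\left| \frac{\partial P_{\vartheta}}{\partial x_k}(re_1, \zeta) \right| \lesssim \frac{(1-r)^{2\vartheta}}{|re_1 - \zeta|^{n+2\vartheta}} + \frac{(1-r)^{1+2\vartheta}}{|re_1 - \zeta|^{n+2\vartheta+1}}.
\]
Multiplying by $|\zeta - e_1|$ and integrating over $\mathbb{S}^{n-1}$, the first term fits Lemma~\ref{lem:keylem} with $q = 1$ and $p = 2\vartheta + 1$ (so that $n - 1 + p = n + 2\vartheta$), while the second fits it with $q = 1$ and $p = 2\vartheta + 2$. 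These yield contributions of size $(1-r)^{2\vartheta}\,(1-r)^{-2\vartheta} = 1$ and $(1-r)^{1+2\vartheta}\,(1-r)^{-(1+2\vartheta)} = 1$ respectively, so the remaining piece is bounded uniformly in $r$, which proves the claim.

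The main obstacle, and the precise place where the hypothesis $\vartheta > 0$ enters, is the verification that Lemma~\ref{lem:keylem} applies to the first term: this requires $p > q$, i.e. $2\vartheta + 1 > 1$, equivalently $\vartheta > 0$. When $\vartheta \le 0$ this inequality fails and the corresponding boundary integral is no longer controlled by $(1-r)^{-(p-q)}$, consistent with the breakdown of Lipschitz continuity exhibited in Part~(i). The delicate point is thus the \emph{exact} cancellation of powers of $1-r$: the factor $(1-r)^{2\vartheta}$ extracted from the kernel must precisely offset the singularity $(1-r)^{-2\vartheta}$ of the boundary integral, and this balance survives only because $\vartheta > 0$ renders the weighted kernel integrable against the Lipschitz gain $|\zeta - e_1|$.
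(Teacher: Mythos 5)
Your proof is correct and follows essentially the same route as the paper: subtract $\phi(e_1)$, control the constant piece via Lemma~\ref{cor:bddintegral} (which is exactly where $\vartheta>0$ enters), and feed the Lipschitz gain $|\zeta-e_1|$ into Lemma~\ref{lem:keylem}, with the same power counting in $1-r$. The only difference is cosmetic: by using the coarser bound $|x_k-\zeta_k|\le|x-\zeta|$ you treat all $k$ uniformly, whereas the paper splits off $k\ge 2$ (where the mean of $\partial P_\vartheta/\partial x_k$ vanishes at $re_1$) from $k=1$; both versions close.
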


\vskip8pt


We consider two different cases.

\vskip8pt

\noindent \textit{Case 1}: $k\in \{2, \ldots,n\}$.

\vskip8pt

By taking $x = re_1$ in \eqref{eqn:intpartialknl}, we have
\begin{align*}
\int_{\mathbb{S}^{n-1}} \frac{\partial P_{\vartheta}}{\partial x_k}(re_1, \zeta) ~d\sigma(\zeta) ~=~ 0,
\end{align*}
and hence
\begin{equation}\label{eqn:trivialone}
\frac{\partial\mathcal{P}_{\vartheta}[\phi]}{\partial x_k}(re_1) = \int_{\mathbb{S}^{n-1}} \frac{\partial P_{\vartheta}}{\partial x_k}(re_1, \zeta) [\phi(\zeta)-\phi(e_1)] ~d\sigma(\zeta).
\end{equation}

Now, taking $x = re_1$ in \eqref{eqn:diffPoissonknl}, we get
\begin{equation*}
\frac{\partial P_{\vartheta}}{\partial x_k} (re_1 , \zeta) = (n+2\vartheta) c_{n,\vartheta}\, \frac{(1-r^2)^{1+2\vartheta}\zeta_k}{|re_1-\zeta |^{n+2\vartheta+2}}.
\end{equation*}
Substituting this into \eqref{eqn:trivialone}, we obtain
\begin{align*}
\left |\frac{\partial\mathcal{P}_{\vartheta}[\phi]}{\partial x_k} (re_1) \right|
~\lesssim~&  (1-r^2)^{1+2\vartheta} \int_{\mathbb{S}^{n-1}}\frac{|\zeta_k|\, |\phi(\zeta)-\phi(e_1)|}{|re_1-\zeta |^{n+2\vartheta+2}}d\sigma(\zeta)\\
~\lesssim~& (1-r^2)^{1+2\vartheta} \int_{\mathbb{S}^{n-1}} \frac{|\zeta -e_1|^2d\sigma(\zeta )}{|re_1-\zeta |^{n+2\vartheta+2}},
\end{align*}
where we used the condition $|\phi(\zeta)-\phi(e_1)|\leq L |\zeta-e_1|$ and the easy observation that $|\zeta_k| \leq |\zeta -e_1|$ in the last inequality.
This, together with Lemma \ref{lem:keylem}, shows that $\frac{\partial\mathcal{P}_{\vartheta}[\phi]}{\partial x_k} (re_1)$ is a bounded
function of $r$ on $[0,1)$.

\vskip8pt

\noindent \textit{Case 2}: $k=1$.

\vskip8pt

It follows from \eqref{eqn:diffPoissonknl} that
\begin{align*}
\left|\frac{\partial P_{\vartheta}}{\partial x_1}(re_1,\zeta) \right|
~\leq~& 2(1+2\vartheta) c_{n,\vartheta} \frac {(1-r^2)^{2\vartheta}} {|re_1-\zeta |^{n+2\vartheta}}
~+~ (n+2\vartheta) c_{n,\vartheta} \frac {(1-r^2)^{1+2\vartheta}|r-\zeta_1|}{|re_1-\zeta |^{n+2\vartheta+2}}\\
~\leq~& 2(1+2\vartheta) c_{n,\vartheta} \frac {(1-r^2)^{2\vartheta}} {|re_1-\zeta |^{n+2\vartheta}}
~+~ (n+2\vartheta) c_{n,\vartheta} \frac {(1-r^2)^{1+2\vartheta}|\zeta-e_1|}{|re_1-\zeta |^{n+2\vartheta+2}}\\
& \qquad \quad
~+~ (n+2\vartheta) c_{n,\vartheta} \frac {(1-r^2)^{1+2\vartheta}(1-r)}{|re_1-\zeta |^{n+2\vartheta+2}}.
\end{align*}
Combining with the condition $|\phi(\zeta)-\phi(e_1)|\leq L |\zeta-e_1|$, this yields
\begin{align*}
\bigg|\frac{\partial\mathcal{P}_{\vartheta}[\phi]}{\partial x_1} & (re_1)\bigg| \\
~\leq~&
\left|\int_{\mathbb{S}^{n-1}} \frac{\partial P_{\vartheta}}{\partial x_1}(re_1,\zeta )\phi(e_1)d\sigma(\zeta )\right|
~+~ \left|\int_{\mathbb{S}^{n-1}} \frac{\partial P_{\vartheta}}{\partial x_1}(re_1,\zeta )[\phi(\zeta )-\phi(e_1)]d\sigma(\zeta )\right| \\
\lesssim ~ &  \underbrace{|\phi(e_1)|\, \left|\int_{\mathbb{S}^{n-1}} \frac{\partial P_{\vartheta}}{\partial x_1}(re_1,\zeta )d\sigma(\zeta ) \right|}_{I_1} ~+~ \underbrace{(1-r^2)^{2\vartheta} \int_{\mathbb{S}^{n-1}} \frac{d\sigma(\zeta)}{|re_1-\zeta|^{n+2\vartheta}}}_{I_2}\\
&\qquad \quad ~+~ \underbrace{(1-r^2)^{1+2\vartheta} \int_{\mathbb{S}^{n-1}} \frac{|\zeta-e_1|}{|re_1-\zeta|^{n+2\vartheta+2}} d\sigma(\zeta)}_{I_3}\\
&\qquad \qquad \quad ~+~ \underbrace{ (1-r^2)^{2+2\vartheta}\int_{\mathbb{S}^{n-1}} \frac{d\sigma(\zeta)}{|re_1-\zeta|^{n+2\vartheta+2}}}_{I_4}.
\end{align*}
Since $\vartheta>0$, by Lemma \ref{cor:bddintegral}, $I_1$ is bounded. Also, in view of Lemma \ref{lem:keylem} , $I_2,I_3,I_4$ are all bounded.
Consequently, $\frac{\partial\mathcal{P}_{\vartheta}[\phi]}{\partial x_1} (re_1)$ is a bounded
function of $r$ on $[0,1)$.

This concludes the proof of the claim and hence the proof of the theorem.

\subsubsection*{Acknowledgement}
The first author was supported by the National Natural Science Foundation of China grant 11971453.

\subsubsection*{Data availability}
We do not analyse or generate any datasets, because our work proceeds within a theoretical and mathematical approach. One can obtain the relevant materials from the references below.


\begin{thebibliography}{plain}

\bibitem{AAR99}
G.~E. Andrews, R. Askey and R. Roy, Special Functions, Cambridge University Press, Cambridge, 1999.

\bibitem{AKM08}
M.~Arsenovi\'c, V.~Koji\'c and M.~Mateljevi\'c,
\newblock On Lipschitz continuity of harmonic quasiregular maps on the unit ball in $\mathbb{R}^n$,
\newblock \emph{Ann. Acad. Sci. Fenn. Math.}, 33 (2008), 315--318.


\bibitem{BH14}
A. Borichev, H. Hedenmalm,
\newblock  Weighted integrability of polyharmonic functions,
\newblock \emph{Advances in Mathematics}, 264 (2014) 464--505.
		
\bibitem{CHRW18}
J. Chen, M. Huang, A. Rasila, X. Wang,
\newblock On Lipschitz continuity of solutions of hyperbolic Poisson's equation,
\newblock Calc. Var., 57:13 (2018),  https://doi.org/10.1007/s00526-017-1290-x.


\bibitem{JP99}
M.~Jevti\'c, M.~Pavlovi\'c,
\newblock Harmonic Bergman functions on the unit ball in $\mathbb{R}^n$.
\newblock \emph{Acta Math. Hungar.} 85 (1999), no. 1--2, 81--96.

\bibitem{LP04}
C. Liu, L. Peng,
\newblock Boundary regularity in the Dirichlet problem for the invariant Laplacians $\Delta_{\gamma}$ on the unit real ball,
\newblock \emph{Proc. Amer. Math. Soc.}, 132 (2004), 3259--3268.

\bibitem{LP09} C. Liu, L. Peng,
\newblock Generalized Helgason-Fourier transforms associated to variants of the Laplace-Beltrami operators on the unit ball in $\mathbb{R}^n$,
\newblock \emph{Indiana Univ. Math. J.}, 58 (2009), no. 3, 1457--1491.

\bibitem{LPS21}
C. Liu, A. Per\"al\"a, J. Si,
\newblock Weighted integrability of polyharmonic functions in the higher-dimensional case,
\newblock \emph{Anal. PDE}, 14 (2021), no. 7, 2047--2068.



\bibitem{Kal08}
D. Kalaj,
\newblock On harmonic quasiconformal self-mappings of the unit ball.
\newblock {\emph Ann. Acad. Sci. Fenn. Math.} 33 (2008), 261--271.

%
\bibitem{MOS66}
W. Magnus, F. Oberhettinger, and R. P. Soni, Formulas and Theorems for the Special Functions of
Mathematical Physics, 3rd ed., Springer-Verlag, Berlin/Heidelberg, 1966.

\bibitem{Olo14}
A. Olofsson,
\newblock Differential operators for a scale of Poisson type kernels in the unit disc,
\newblock \emph{J. Anal. Math.}, 123 (2014), 227--249.
%
%

\end{thebibliography}
\end{document}